\documentclass[12pt]{amsart}

\usepackage[english]{babel}

\usepackage[pdftex,textwidth=400pt,marginratio=1:1]{geometry}

\usepackage{amsfonts}

\usepackage[dvips]{graphics}

\usepackage[colorinlistoftodos]{todonotes}

\usepackage{amsmath}

\usepackage{amsthm}

\usepackage{amssymb}

\usepackage{bbm}

\usepackage{cancel}

\usepackage{color}

\usepackage[curve]{xypic}

\usepackage{graphicx}




\newtheorem{theorem}{Theorem}[section]

\newtheorem{lemma}[theorem]{Lemma}

\theoremstyle{definition}

\newtheorem{remark}[theorem]{Remark}

\theoremstyle{property}

\makeatletter
\newcommand{\contraction}[5][1ex]{%
  \mathchoice
    {\contraction@\displaystyle{#2}{#3}{#4}{#5}{#1}}%
    {\contraction@\textstyle{#2}{#3}{#4}{#5}{#1}}%
    {\contraction@\scriptstyle{#2}{#3}{#4}{#5}{#1}}%
    {\contraction@\scriptscriptstyle{#2}{#3}{#4}{#5}{#1}}}%
\newcommand{\contraction@}[6]{%
  \setbox0=\hbox{$#1#2$}%
  \setbox2=\hbox{$#1#3$}%
  \setbox4=\hbox{$#1#4$}%
  \setbox6=\hbox{$#1#5$}%
  \dimen0=\wd2%
  \advance\dimen0 by \wd6%
  \divide\dimen0 by 2%
  \advance\dimen0 by \wd4%
  \vbox{%
    \hbox to 0pt{%
      \kern \wd0%
      \kern 0.5\wd2%
      \contraction@@{\dimen0}{#6}%
      \hss}%
    \vskip 0.5ex
    \vskip\ht2}}

\newcommand{\contraction@@}[3][0.05em]{%
  \hbox{%
    \vrule width #1 height 0pt depth #3%
    \vrule width #2 height 0pt depth #1%
    \vrule width #1 height 0pt depth #3%
    \relax}}
\makeatother

\DeclareFontFamily{OT1}{rsfs}{}
\DeclareFontShape{OT1}{rsfs}{n}{it}{<-> rsfs10}{}
\DeclareMathAlphabet{\curly}{OT1}{rsfs}{n}{it}

\newcommand\Sing{\mathrm{Sing}}

\newcommand\Quot{\mathrm{Quot}}

\newcommand\onto{\mathrm{onto}}

\newcommand\pure{\mathrm{pure}}

\renewcommand\S{\mathcal S}
\renewcommand\O{\mathcal O}
\newcommand\PP{\mathbb P}

\newcommand\mdot{{\scriptscriptstyle\bullet}}

\newcommand\cE{\mathcal E}

\newcommand\F{\mathcal F}

\newcommand\bfone{\mathbf{1}}
\newcommand\bftwo{\mathbf{2}}
\newcommand\bfthree{\mathbf{3}}

\newcommand\T{\mathcal T}

\newcommand\C{\mathbb C}

\newcommand\Q{\mathbb Q}
\newcommand\cQ{\mathcal Q}

\newcommand\ccR{\mathcal R}

\newcommand\Z{\mathbb Z}

\newcommand\INTO{\ar@{^{(}->}[r]}

\newcommand\rk{\operatorname{rk}}

\newcommand\coker{\operatorname{coker}}

\newcommand\Hom{\operatorname{Hom}}
\renewcommand\hom{\curly H\!om}
\newcommand\Ext{\operatorname{Ext}}
\newcommand\ext{\curly Ext}

\newcommand\Hilb{\operatorname{Hilb}}

\newcommand\beq[1]{\begin{equation}\label{#1}}
\newcommand\eeq{\end{equation}}
\newcommand\beqa{\begin{eqnarray*}}
\newcommand\eeqa{\end{eqnarray*}}

\setcounter{secnumdepth}{2}
\DeclareRobustCommand{\SkipTocEntry}[4]{}

\begin{document}
\title[Rank 2 wall-crossing and the Serre correspondence]{Rank 2 wall-crossing and the Serre correspondence}
\author[A.~Gholampour and M.~Kool]{Amin Gholampour and Martijn Kool}
\maketitle

\begin{abstract}
We study Quot schemes of 0-dimensional quotients of sheaves on 3-folds $X$. When the sheaf $\ccR$ is rank 2 and reflexive, we prove that the generating function of Euler characteristics of these Quot schemes is a power of the MacMahon function times a polynomial. This polynomial is itself the generating function of Euler characteristics of Quot schemes of a certain 0-dimensional sheaf, which is supported on the locus where $\ccR$ is not locally free. 

In the case $X = \C^3$ and $\ccR$ is equivariant, we use our result to prove an explicit product formula for the generating function. This formula was first found using localization techniques in previous joint work with B.~Young. Our results follow from R.~Hartshorne's Serre correspondence and a rank 2 version of a Hall algebra calculation by J.~Stoppa and R.~P.~Thomas.
\end{abstract}
\thispagestyle{empty}


\section{Introduction} \label{intro}

Let $\ccR$ be a reflexive\footnote{A torsion free sheaf $\F$ is called reflexive if the natural embedding $\F \hookrightarrow \F^{**}$ is an isomorphism. Here $(\cdot)^* := \hom(\cdot, \O_X)$ denotes the dual.} sheaf on a smooth 3-fold $X$. Since $X$ is 3-dimensional, $\ccR$ is locally free outside a 0-dimensional closed subset. For any torsion free sheaf $\F$ on $X$ its double dual $\F^{**}$ is reflexive and we have a natural embedding $\F \hookrightarrow \F^{**}$. Reflexive sheaves are much easier than torsion free sheaves, because they are determined by their restriction to the complement of any closed subset of codimension $\geq 2$ \cite[Prop.~1.6]{Har2}. 

Let $X = \C^3$ and consider the natural action of $T = \C^{*3}$ on $X$. Suppose $\ccR$ is a rank 2 $T$-equivariant reflexive sheaf on $X$. Then $H^0(\ccR)$ is graded by the character group $X(T) = \Z^3$. We denote by $\Quot(\ccR,n)$ the Quot scheme of 0-dimensional quotients $\ccR \twoheadrightarrow \cQ$,
where $\cQ$ has length $n$. Using $T$-localization, the authors and B.~Young calculated the topological Euler characteristics $e\big(\Quot(\ccR,n)\big)$ \cite{GKY1}. The answer is as follows. 

In the case $\ccR$ is locally free, it splits $T$-equivariantly as a sum of two line bundles \cite[Prop.~3.4]{GKY1}. The splitting gives an additional scaling action of $\C^*$ on the summands. It is not hard to show that localization with respect to $T \times \C^*$ gives
$$
\sum_{n=0}^{\infty} e\big(\Quot(\ccR,n)\big) \, q^n = M(q)^2,
$$
where $M(q) = \prod_{k>0} 1/(1-q^k)^k$ denotes the MacMahon function counting 3D partitions. The interesting case is when $\ccR$ is singular, i.e.~not locally free. In this case the rank 2 module $H^0(\ccR)$ has \emph{three} homogeneous generators, whose weights can be uniquely written as  
\begin{equation} \label{wghts}
(u_1,u_2,u_3) + (v_1,v_2,0), \ (u_1,u_2,u_3) + (v_1,0,v_3), \ (u_1,u_2,u_3) + (0,v_2,v_3),
\end{equation}
where $u_1,u_2,u_3 \in \Z$ and $v_1,v_2,v_3 \in \Z_{>0}$. See \cite[Prop.~3.4]{GKY1} for details.
\begin{theorem} [Gholampour-Kool-Young] \label{GKYthm}
Let $\ccR$ be a singular rank 2 $T$-equivariant reflexive sheaf on $\C^3$ with homogeneous generators of weights \eqref{wghts}. Then
$$
\sum_{n=0}^{\infty} e\big(\Quot(\ccR,n)\big) \, q^n = M(q)^2 \prod_{i=1}^{v_1} \prod_{j=1}^{v_2} \prod_{k=1}^{v_3} \frac{1-q^{i+j+k-1}}{1-q^{i+j+k-2}}.
$$

\end{theorem}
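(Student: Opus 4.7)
The strategy is to invoke the main result of the paper (announced in the abstract), which states that for any rank 2 reflexive sheaf $\ccR$ on a smooth 3-fold one has
$$
\sum_{n\geq 0} e\big(\Quot(\ccR,n)\big)\,q^n \;=\; M(q)^2 \cdot \sum_{n\geq 0} e\big(\Quot(\cA,n)\big)\,q^n,
$$
where $\cA$ is a canonically defined $0$-dimensional sheaf supported on the locus where $\ccR$ fails to be locally free, arising via the Serre correspondence and a rank 2 Hall-algebra argument modelled on Stoppa--Thomas. Granting this, the proof of Theorem~\ref{GKYthm} reduces to two tasks in the equivariant setting of $X=\C^3$: identify $\cA$ explicitly, and compute the polynomial $\sum_n e(\Quot(\cA,n))\,q^n$.

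For the first task, I would apply the Serre correspondence to the $T$-equivariant rank 2 reflexive sheaf $\ccR$. After twisting by the equivariant line bundle with weight $(u_1,u_2,u_3)$ we may assume the three generators of $H^0(\ccR)$ have weights $(v_1,v_2,0)$, $(v_1,0,v_3)$, $(0,v_2,v_3)$. Serre correspondence presents $\ccR$ as an extension of an ideal sheaf by a line bundle, and the obstruction to local freeness is measured precisely by $\cA$. A direct computation using these three generators and their relations should identify
$$
\cA \;\cong\; \C[x,y,z]/(x^{v_1}, y^{v_2}, z^{v_3}),
$$
the structure sheaf of a $0$-dimensional box at the origin of length $v_1 v_2 v_3$. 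The three weight triples \eqref{wghts} correspond neatly to the three ``missing'' monomials $x^{v_1}, y^{v_2}, z^{v_3}$ generating the box ideal, making this identification plausible.

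For the second task, I would compute the generating function of $e(\Quot(\cA,n))$ via $T$-localization. The $T$-fixed quotients of $\cA = \C[x,y,z]/(x^{v_1},y^{v_2},z^{v_3})$ correspond bijectively to monomial ideals of the box, i.e.~to plane partitions fitting inside a $v_1 \times v_2 \times v_3$ box. MacMahon's classical box formula then gives
$$
\sum_{n\geq 0} e\big(\Quot(\cA,n)\big)\,q^n \;=\; \prod_{i=1}^{v_1}\prod_{j=1}^{v_2}\prod_{k=1}^{v_3} \frac{1-q^{i+j+k-1}}{1-q^{i+j+k-2}},
$$
and combining with the reduction above proves the theorem.

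The main obstacle I anticipate is the identification step: one must translate the combinatorial input of the weight triples \eqref{wghts} into the algebraic statement that $\cA$ is exactly the box scheme. This amounts to carefully tracking equivariant weights through the Serre sequence for $\ccR$ and checking that the $0$-dimensional cokernel coincides with $\O_{\mathrm{box}}$; the remaining ingredients (MacMahon's box formula and $T$-localization for $0$-dimensional Quot schemes) are standard.
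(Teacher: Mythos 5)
Your overall route coincides with the paper's: reduce, via the wall-crossing theorem, to the Quot scheme of the $0$-dimensional sheaf $\ext^1(\ccR,\O_X)$, identify that sheaf with the structure sheaf of the box scheme cut out by $(x_1^{v_1},x_2^{v_2},x_3^{v_3})$, and finish with MacMahon's box formula via $T$-localization; your guessed identification is exactly what the paper establishes (it does so by writing an explicit equivariant two-term resolution $0\to L_0\to N_0^{(1)}\oplus N_0^{(2)}\oplus N_0^{(3)}\to R_0\to 0$ and dualizing). However, there is a genuine gap at your very first step: Theorem \ref{main} is stated and proved only for smooth \emph{projective} $3$-folds, and its prefactor is $M(q)^{2e(X)}$, not $M(q)^2$; the paper contains no statement of the identity with $M(q)^2$ for the non-compact $\C^3$ (the Hall algebra argument uses the stack of $0$-dimensional sheaves on a projective $X$, Serre duality, the virtual Poincar\'e polynomial machinery of Stoppa--Thomas, etc.). The paper bridges this by compactifying: it extends the equivariant sheaf to a reflexive sheaf $\ccR$ on $\PP^3$ which restricts to $\O_{U_i}^{\oplus 2}$ on the other three standard charts, applies Theorem \ref{main} on $\PP^3$ (giving $M(q)^{8}$, since $e(\PP^3)=4$), and uses $T$-localization together with the extra scaling $\C^*$-action on the trivialized charts to write the left-hand side as $M(q)^{6}$ times the generating function over the chart $U_0\cong\C^3$; cancelling $M(q)^6$ produces the $M(q)^2$ of Theorem \ref{GKYthm}. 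Without this compactification-and-peeling argument (or an independent non-compact version of the wall-crossing identity), your first displayed equation is unjustified.

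A second, smaller omission: Theorem \ref{main} carries hypotheses you never verify, namely $H^1(\det\ccR)=H^2(\det\ccR)=0$ (automatic once one is on $\PP^3$) and, more substantially, the existence of a cosection $\ccR\to\O_X$ cutting out a $1$-dimensional closed subscheme. The paper constructs this explicitly by choosing a generic equivariant sub-line bundle $L_0\subset R_0$ with $R_0/L_0\cong(x_1^{v_1}x_2^{v_2},\,x_1^{v_1}x_3^{v_3},\,x_2^{v_2}x_3^{v_3})$, the ideal of the thickened coordinate axes, and extending it over $\PP^3$; your appeal to ``the Serre correspondence'' presupposes such a cosection rather than producing it. With these two points supplied, the remaining steps --- the identification of $\ext^1(\ccR,\O_X)$ with the box structure sheaf and the enumeration of box-confined 3D partitions by Stanley's formula --- are exactly as in the paper and are fine.
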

This formula was used as a building block for generating functions of Euler characteristics and virtual invariants of moduli spaces of rank 2 stable torsion free sheaves on toric 3-folds in \cite{GKY1}. The RHS of this formula is $M(q)^2$ times the generating function of 3D partitions confined to the box $[0,v_1] \times [0,v_2] \times [0,v_3]$ \cite[(7.109)]{Sta}.\footnote{More precisely: for each box of the 3D partition, the corner closest to the origin is an element of $[0,v_1-1] \times [0,v_2-1] \times [0,v_3-1]$.} The components of the fixed locus $\Quot(\ccR,n)^T$ are unions of products of $\PP^1$ and the combinatorics of their enumeration is solved using the double dimer model in \cite{GKY2}. 
From the shape of the formula, we expected another proof by wall-crossing should exist, which prompted this paper. In this paper, we derive Theorem \ref{GKYthm} from Theorem \ref{main} below.

Let $X$ be any smooth projective 3-fold and $\ccR$ a rank 2 reflexive sheaf on $X$. Then $\ext^1(\ccR,\O_X)$ is a 0-dimensional sheaf supported on the singularities of $\ccR$, i.e.~on the locus where $\ccR$ is \emph{not} locally free. The length of this sheaf is equal to $c_3(\ccR)$ \cite[Prop.~2.6]{Har2}. 
\begin{theorem} \label{main}
Let $\ccR$ be a rank 2 reflexive sheaf on a smooth projective 3-fold $X$. Suppose $H^1(\det(\ccR))=H^2(\det(\ccR)) = 0$ and there exists a cosection $\ccR \rightarrow \O_X$ cutting out a 1-dimensional closed subscheme. Then
\begin{equation} \label{maineq}
\sum_{n=0}^{\infty} e\big(\Quot(\ccR,n)\big) \, q^n = M(q)^{2e(X)} \sum_{n=0}^{\infty} e\big(\Quot(\ext^1(\ccR,\O_X),n)\big) \, q^n.
\end{equation}
In particular the RHS is $M(q)^{2e(X)}$ times a polynomial of degree $c_3(\ccR)$.
\end{theorem}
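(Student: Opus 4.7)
My plan is to combine Hartshorne's Serre correspondence with a rank-$2$ enhancement of the Hall algebra wall-crossing of Stoppa--Thomas. First, the cosection $\ccR \to \O_X$ is surjective onto the ideal sheaf $\I_C$, and its kernel is a rank-$1$ reflexive subsheaf whose determinant equals $\det(\ccR)$, giving the Serre extension
\begin{equation*}
0 \longrightarrow L \longrightarrow \ccR \longrightarrow \I_C \longrightarrow 0, \qquad L := \det(\ccR),
\end{equation*}
with $C$ the $1$-dimensional subscheme cut out by the cosection. The hypotheses $H^1(L) = H^2(L) = 0$ are exactly the vanishings needed for Hartshorne's theorem to describe such extensions cleanly and to control the dimension of the extension group.

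Second, I would pass to the motivic Hall algebra of coherent sheaves on $X$. The generating series $\sum_n [\Quot(\ccR,n)]\, q^n$ lives naturally in a completion of this algebra, and the Serre sequence induces a factorization as a Hall product of the analogous series for $L$ and for $\I_C$. Applying Joyce's integration morphism and using that the Euler pairing between $0$-dimensional sheaves on a smooth $3$-fold vanishes, the Hall product collapses to a genuine product of generating series:
\begin{equation*}
\sum_n e(\Quot(\ccR,n))\, q^n \;=\; \Bigl( \sum_n e(\Quot(L,n))\, q^n \Bigr) \cdot \Bigl( \sum_n e(\Quot(\I_C,n))\, q^n \Bigr).
\end{equation*}
Since $L$ is a line bundle, $\Quot(L,n) \cong \Hilb^n(X)$ and the first factor equals $M(q)^{e(X)}$ by Cheah's formula. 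For the ideal sheaf factor, the original Stoppa--Thomas wall-crossing (applied to thickenings of the Cohen--Macaulay curve $C$ by embedded points) yields another $M(q)^{e(X)}$ times a stable-pair-type generating series supported on $C$; by the Pandharipande--Thomas description this series equals the Quot series of the $1$-dimensional sheaf $\ext^1(\I_C, \O_X)$ on $C$. Finally, applying $\hom(-,\O_X)$ to the Serre sequence relates $\ext^1(\I_C,\O_X)$, $\ext^1(\ccR,\O_X)$, and $L^\vee$, and a local analysis at each singular point of $\ccR$ identifies the residual Quot series with $\sum_n e(\Quot(\ext^1(\ccR,\O_X), n))\, q^n$. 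The polynomiality of degree $c_3(\ccR)$ then follows from the fact that $\ext^1(\ccR, \O_X)$ is $0$-dimensional of total length $c_3(\ccR)$.

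\textbf{Main obstacle.} The crux will be setting up the rank-$2$ Hall algebra factorization correctly: one must verify that the cohomological hypotheses on $L$ are precisely what is needed for the integration map to be multiplicative on the relevant classes, and that no spurious extension-class contribution appears. The scheme-theoretic matching between the Pandharipande--Thomas pair moduli and the Quot scheme of the $0$-dimensional sheaf $\ext^1(\ccR, \O_X)$ in the final identification step is also delicate and requires careful local analysis at the singularities of $\ccR$, where the cosection-induced map $L^\vee \to \ext^1(\I_C, \O_X)$ has to be matched with a corresponding degeneration on the Quot side.
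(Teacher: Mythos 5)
Your overall plan (Serre correspondence plus a Hall-algebra wall-crossing in the style of Stoppa--Thomas) is in the right spirit, but the central factorization you propose is not valid, and it leads to the wrong answer. The Serre sequence $0 \to L \to \ccR \to I_C \to 0$ does \emph{not} induce an identity $\sum_n e(\Quot(\ccR,n))q^n = \big(\sum_n e(\Quot(L,n))q^n\big)\cdot\big(\sum_n e(\Quot(I_C,n))q^n\big)$: the stack of $0$-dimensional quotients of $\ccR$ is not the Hall product of the corresponding stacks for $L$ and $I_C$, because a pair of quotients $L \twoheadrightarrow \cQ_1$, $I_C \twoheadrightarrow \cQ_2$ together with an extension $\cQ$ of $\cQ_2$ by $\cQ_1$ need not lift to a surjection $\ccR \twoheadrightarrow \cQ$, and when it does the fibres are torsors of varying dimension; no Euler-characteristic specialization removes this. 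A concrete counterexample: take $X=\PP^3$ and $\ccR = \O_X(-1)^{\oplus 2}$ with cosection given by two independent linear forms, cutting out a line $C\cong\PP^1$. Since $\ccR$ is locally free, $\ext^1(\ccR,\O_X)=0$ and the true answer is $M(q)^{8}$, whereas your product would give $M(q)^{4}\cdot M(q)^{4}\cdot (1-q)^{-2}$, the last factor coming from PT pairs supported on $C$ (quotients of the line bundle $\omega_C\otimes\omega_X^{-1}|_C$ on $\PP^1$). Relatedly, your last step cannot work even in principle: $\ext^1(I_C,\O_X)\cong\ext^2(\O_C,\O_X)$ is a $1$-dimensional sheaf, so its Quot series is an infinite series, and no ``local analysis at the singular points'' can identify it with the Quot series of the $0$-dimensional sheaf $\ext^1(\ccR,\O_X)$, which is a polynomial of degree $c_3(\ccR)$. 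What your outline loses is precisely the role of the section $\xi$ of the Hartshorne pair: the correct residual factor counts only those quotients of $(\O_C)^{D}_{L}\cong\ext^1(I_C,L)$ that kill the image of $\xi$, i.e.\ factor through $\coker(\xi)=\ext^1(\ccR,L)$ (equivalently, sub-PT pairs of the dual Hartshorne pair), not all PT pairs on $C$.

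The paper's proof keeps the wall-crossing genuinely rank $2$ rather than splitting off $L$. On the DT side one has the stack $\Hom(\ccR,\cdot)^{\onto}$ of surjections; on the PT side one works with $\Ext^2(\cdot,\ccR)$, embedded via the cosection into $\Ext^1(\cdot,\O_C)$ (Lemma \ref{lem2}), and one defines the ``pure'' locus $\Ext^2(\cdot,\ccR)^{\pure}$ there. The hard geometric input is Theorem \ref{alt}: the pure locus is in bijection with $\bigsqcup_n\Quot(\ext^1(\ccR,\O_X),n)$, which is exactly where the $0$-dimensional sheaf $\ext^1(\ccR,\O_X)$ (and the constraint imposed by $\xi$) enters. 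The factor $M(q)^{2e(X)}$ then comes not from a $\Quot(L)$ factor but from the dimension identity $\dim\Ext^1(\ccR,\cQ[-1])-\dim\Ext^1(\cQ[-1],\ccR)=\chi(\ccR,\cQ)=2\ell(\cQ)$ (Lemma \ref{lem1}), implemented via the virtual Poincar\'e polynomial, the auxiliary stack $\C^{2\ell(\cdot)}$, and the limit $\lim_{z\to 1}P_z\big(\Hom(\O_X^{\oplus 2},\cdot)^{\onto}\big)(q)=M(q)^{2e(X)}$. To repair your argument you would need to replace the product factorization by this direct comparison of $\Hom(\ccR,\cdot)$ with $\Ext^2(\cdot,\ccR)$, and supply the identification of the stable PT-side objects with quotients of $\ext^1(\ccR,\O_X)$.
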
 

\begin{remark} \label{weakassump}
In an earlier version of this paper, we conjectured that \eqref{maineq} holds for any rank 2 reflexive sheaf $\ccR$ on any smooth projective threefold $X$. At the end of this introduction, we present an argument due to J.~Rennemo, which indeed shows that this is the case. Essentially, after replacing $\ccR$ by an appropriate twist $\ccR(-mH)$, the vanishing conditions are automatically satisfied and the desired cosection always exists. Alternatively, in the sequel \cite{GK2}, we generalize \eqref{maineq} to arbitrary torsion free sheaves of homological dimension $\leq 1$ and any rank on any smooth projective threefold. 
\end{remark}

\begin{remark} [Hilbert scheme version] \label{Hilbremark}
Let $\ccR$ be a rank 2 reflexive sheaf on a smooth projective 3-fold $X$. Since $\ccR$ is rank 2 and reflexive, it has homological dimension $\leq 1$ and admits a 2-term resolution by vector bundles \cite[Prop.~1.3]{Har2}
$$
0 \rightarrow \cE_1 \rightarrow \cE_0 \rightarrow \ccR \rightarrow 0.
$$ 
A priori we only know $\rk \cE_0 - \rk \cE_1 = 2$. However in the \emph{minimal rank case}, i.e.~$\rk \cE_1 = 1$, we can dualize the above sequence and obtain
$$
\cdots \rightarrow \cE_{1}^{*} \twoheadrightarrow \ext^1(\ccR,\O_X),
$$
which shows $\ext^1(\ccR,\O_X)$ is a structure sheaf (recall that $\ext^1(\ccR,\O_X)$ is 0-dimensional). We denote the corresponding 0-dimensional subscheme by $\Sing(\ccR)$. 
Then \eqref{maineq} implies
\begin{equation} \label{Hilb}
\sum_{n=0}^{\infty} e\big(\Quot(\ccR,n)\big) \, q^n = M(q)^{2e(X)} \sum_{n=0}^{\infty} e\big(\Hilb^n(\Sing(\ccR)) \big) \, q^n,
\end{equation}
where $\Hilb^n(\Sing(\ccR))$ denotes the Hilbert scheme of $n$ points on $\Sing(\ccR)$. In particular the RHS is $M(q)^{2e(X)}$ times a polynomial of degree $\ell(\Sing(\ccR))$, where $\ell(\cdot)$ denotes length. 
Theorem \ref{GKYthm} is derived from \eqref{Hilb} combined with a $T$-localization argument in Section \ref{torus}.
\end{remark}

\subsection{Idea of proof and heuristics} The proof of Theorem \ref{main} uses a rank 2 version of a wall-crossing (or rather Hall algebra) calculation by J.~Stoppa and R.~P.~Thomas \cite{ST} along the lines of \cite[Section 3.3]{PT1}. These Hall algebra methods were introduced by D.~Joyce \cite{Joy1,Joy2, Joy3, Joy4}, and M.~Kontsevich and Y.~Soibelman \cite{KS}, see also \cite{Bri1, Bri2}. We provide the main ideas leading to our formula:

\begin{enumerate}
\item On one side of the wall (DT side), we are concerned with \emph{surjections}\footnote{Surjectivity can be regarded as a stability condition on the DT side.} inside
$$
\Ext^1(\ccR,\cQ[-1]) \cong \Hom(\ccR,\cQ),
$$ corresponding to short exact sequences $$0\to \F\to \ccR\to \cQ\to 0,$$
where $\ccR$ is fixed and $\cQ$ varies among all 0-dimensional sheaves on $X$. Equivalenty, $\F$ varies over all rank 2 torsion free sheaves with reflexive hull $\ccR$ and 0-dimensional cokernel. The sequence above gives the exact triangle $$\cQ[-1]\to \F\to \ccR$$ in the derived category $D^b(X)$. This is a rank 2 analog of (3.6) in \cite{PT1}. 
\item On the other side of the wall (PT side), we obtain objects $A^\bullet \in D^b(X)$ given by 2-term complexes of coherent sheaves that fit in exact triangles of the form $$\ccR\to A^\bullet \to \cQ[-1],$$ i.e.~elements of $\Ext^1(\cQ[-1],\ccR)$. This exact triangle is an analog of (3.7) in \cite{PT1}. Next we use our assumption: the existence of a cosection $\ccR \rightarrow \O_X$ cutting out a 1-dimensional closed subscheme $C \subset X$. This enables us to describe the objects $A^\bullet$ which are ``stable'' on the PT side of the wall. The cosection allows us to embed (Lemma \ref{lem2})
$$
\Ext^1(\cQ[-1],\ccR) \subset \Ext^1(\cQ[-1],I_C) \cong \Ext^1(\cQ,\O_C),
$$
where $I_C \subset \O_X$ denotes the ideal sheaf of $C \subset X$ and the second isomorphism follows from the usual short exact sequence. Note that elements of $\Ext^1(\cQ,\O_C)$ correspond to short exact sequences $$0 \rightarrow \O_C \rightarrow F \rightarrow \cQ \rightarrow 0.$$ The elements of $\Ext^1(\cQ,\O_C)$ with \emph{pure}\footnote{Purity can be regarded as a stability condition on the PT side.} $F$ are exactly the stable pairs (PT pairs) with underlying Cohen-Macaulay curve $C$. Therefore, inside $\Ext^1(\cQ[-1],\ccR)$ we have the locus of elements for which the corresponding extension in $\Ext^1(\cQ,\O_C)$ is a PT pair. The objects $A^\bullet$ in this locus can be viewed as some kind of ``rank 2 PT pairs''. Note that this notion a priori depends on the choice of cosection $\ccR \rightarrow \O_X$.
\item One of the key contributions of this paper is that we give a geometric description of the ``stable objects'' $A^\bullet$ of (2) purely in terms of the singularities of $\ccR$. At the level of generating functions this gets rid of the choice of cosection $\ccR \rightarrow \O_X$. Specifically, in Theorem \ref{alt}, we prove that there exists a geometric bijection between the locus of ``stable objects'' $A^\bullet$ described in (2) and $$\bigsqcup_n \Quot(\ext^1(\ccR,\O_X),n).$$  
\end{enumerate}

The RHS of Theorem \ref{main} can be seen as $M(q)^{2 e(X)}$ times the generating function of sub-PT pairs of a fixed PT pair, which we call the \emph{Hartshorne PT pair} (or rather its dual). By the \emph{Serre correspondence}, reviewed in Section \ref{Serresection}, the data of a cosection $\ccR \rightarrow \O_X$ cutting out a 1-dimensional closed subscheme is equivalent to a Hartshorne PT pair. This is discussed in Remark \ref{PTview}.

\begin{remark}\label{Toda} [Toda]
Very recently Y.~Toda \cite{Tod} developed a theory of higher rank PT pairs on any smooth projective Calabi-Yau 3-fold $X$. He proves a beautiful wall-crossing formula between the virtual counts of these pairs and higher rank Donaldson-Thomas invariants. The shape of his formula for rank 2 is similar to Theorem \ref{main}. This suggests our RHS can also be interpreted as $M(q)^{2e(X)}$ times a generating function of rank 2 PT pairs (probably with several additional constraints). It is interesting to find out the precise relationship.
\end{remark}

\noindent \textbf{Acknowledgements.} We thank R.~P.~Thomas for useful conversations. Special thanks go to B.~Young. He originally found the product formula of Theorem \ref{GKYthm} from the toric description of $\Quot(\ccR,n)^T$ using the double dimer model \cite{GKY1,GKY2}. His formula prompted this project. We thank the anonymous referee for pointing out why one needs $H^2(L) = 0$ in the Hartshorne-Serre correspondence (Theorem \ref{Serre}). We also warmly thank J.~Rennemo for pointing out that the assumptions of Theorem \ref{main} can be weakened (Remark \ref{weakassump}). A.G.~was partially supported by NSF grant DMS-1406788. M.K.~was supported by Marie Sk{\l}odowska-Curie Project 656898.

\subsection{Rennemo's argument} Assume Theorem \ref{main} holds. Then equation \eqref{maineq} holds for \emph{any} rank 2 reflexive sheaf $\ccR$ on any smooth projective threefold $X$ (Remark \ref{weakassump}). This follows from the following argument due to Rennemo.\footnote{We take full responsibility for the way in which the argument is presented.} We may assume $\ccR$ is singular, because in the locally free case \eqref{maineq} easily follows by taking a trivialization of $\ccR$. 

Let $H$ be a very ample divisor on $X$. Then equation \eqref{maineq} for $\ccR$ is equivalent to equation \eqref{maineq} for $\ccR':=\ccR(-mH)$, for any $m$. It suffices to show that for $m \gg 0$, $\ccR'$ satisfies
\begin{itemize}
\item $H^1(\det \ccR')= H^2(\det \ccR')= 0$, and 
\item there exists a cosection $\ccR' \rightarrow \O_X$ cutting out a 1-dimensional closed subscheme.
\end{itemize}
Since $\det \ccR' = (\det \ccR)(-2mH)$, the first follows for $m \gg 0$ by Serre duality and Serre vanishing. Also $\ccR^{\prime *} = \ccR^*(mH)$ is globally generated for $m \gg 0$:
\begin{equation} \label{globgen}
\pi : V \otimes \O_{X} \twoheadrightarrow \ccR^{\prime *},
\end{equation}
where $V:=  H^0(X,\ccR^{\prime *})$. Let $X^\circ = X \setminus \mathrm{Supp} \, \ext^1(\ccR',\O_X)$, i.e.~the complement of the finitely many points where $\ccR'$ is singular, and let $N:=\dim V$. Restriction of \eqref{globgen} to points of $X^\circ$ induces a morphism
$$
f : X^\circ \rightarrow \mathrm{Gr},
$$
where $\mathrm{Gr}:=\mathrm{Gr}(N-2,V)$ denotes the Grassmannian of $(N-2)$-dimensional subspaces of $V$. Denote its universal quotient by $\Pi : V \otimes \O_{\mathrm{Gr}} \twoheadrightarrow \cQ$, then $f^* \Pi = \pi|_{X^\circ}$. For any non-zero section $t \in H^0(\mathrm{Gr}, V \otimes \O_{\mathrm{Gr}}) = V$  
$$
\Pi \circ t \in H^0(\mathrm{Gr},\cQ)
$$
cuts out a smooth locus $Z(\Pi \circ t)$ of codimension 2. By Kleiman-Bertini \cite[Thm.~III.10.8]{Har3}, for \emph{generic} $t$ we have:
$$
Z(\Pi \circ t) \times_{\mathrm{Gr}} X^\circ = f^* Z(\Pi \circ t) = Z(\pi|_{X^\circ} \circ f^* t) \subset X^\circ
$$
is empty or has codimension 2. The empty case can be ruled out.\footnote{In the empty case, we get a surjective morphism $\ccR'|_{X^\circ} \twoheadrightarrow \O_{X^\circ}$. Since $\ccR'$ is reflexive, it lifts to a surjection $\ccR' \twoheadrightarrow \O_X$. Its kernel is easily seen to be a line bundle by \cite[Prop.~1.1.6, 1.1.10]{HL} and hence $\ccR'$ is locally free, contradicting the assumption that $\ccR'$ is singular.} Dualizing $\pi|_{X^\circ} \circ f^* t \in H^0(X^\circ, \ccR^{\prime *})$ produces a cosection $\ccR'|_{X^\circ} \rightarrow \O_{X^\circ}$ cutting out a 1-dimensional closed subscheme. Since $\ccR'$ is reflexive, this extends to a cosection on $X$ cutting out a 1-dimensional closed subscheme.\footnote{Kleiman-Bertini also gives that the cosection can be taken to cut out a \emph{reduced} curve.}

\section{Moduli spaces}

\subsection{Serre Correspondence} \label{Serresection} We start by recalling the Serre correspondence \cite{Har1, Har2}. For a smooth projective 3-fold $X$, the Serre correspondence gives a bijection between rank 2 locally free sheaves on $X$ with section and certain lci curves on $X$ \cite{Har1}. R.~Hartshorne extended this to reflexive sheaves \cite{Har2}.
\begin{theorem} [Hartshorne-Serre correspondence] \label{Serre}
Let $X$ be a smooth projective 3-fold and $L$ a line bundle on $X$ satisfying $H^1(L) =H^2(L) =0$. Then there exists a bijective correspondence between:
\begin{itemize}
\item Pairs $(\ccR,\sigma)$, where $\ccR$ is a rank 2 reflexive sheaf on $X$ with $\det(\ccR) \cong L$ and $\sigma : \ccR \rightarrow \O_X$ a cosection cutting out a 1-dimensional closed subscheme.
\item Pairs $(C,\xi)$, where $C \subset X$ is a Cohen-Macaulay curve which is generically lci and $\xi : \O_X \rightarrow \omega_C \otimes \omega_{X}^{-1} \otimes L$ has 0-dimensional cokernel.
\end{itemize}
\end{theorem}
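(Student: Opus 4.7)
The plan is to establish the correspondence via the Serre construction in both directions. The main technical inputs are the local-to-global Ext spectral sequence for $\Ext^1(I_C,L)$, Grothendieck duality for a Cohen-Macaulay curve in a smooth threefold (giving $\ext^1(I_C,L)\cong \omega_C\otimes\omega_X^{-1}\otimes L$, with $\hom(I_C,L)\cong L$ and higher local Exts vanishing), and the fact that a rank 2 torsion-free sheaf on a smooth 3-fold is reflexive if and only if it is locally free outside a 0-dimensional subset.

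For the forward direction $(\ccR,\sigma)\mapsto (C,\xi)$, the image of $\sigma$ is an ideal sheaf $I_C$ whose cosupport is the 1-dimensional subscheme $C$, and $\ker(\sigma)$ is a rank 1 reflexive subsheaf of $\ccR$ (kernels of surjections from reflexive onto torsion-free sheaves are reflexive), hence a line bundle $M$. Taking determinants in
\begin{equation*}
0 \to M \to \ccR \to I_C \to 0
\end{equation*}
gives $M\cong L$. Applying $\hom(-,\O_X)$ and inserting the local Ext computations produces
\begin{equation*}
0 \to \O_X \to \ccR^* \to L^{-1} \xrightarrow{\delta} \omega_C\otimes\omega_X^{-1} \to \ext^1(\ccR,\O_X) \to 0,
\end{equation*}
and tensoring $\delta$ by $L$ gives the desired $\xi:\O_X\to \omega_C\otimes\omega_X^{-1}\otimes L$ whose cokernel $\ext^1(\ccR,\O_X)\otimes L$ is 0-dimensional. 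That $C$ is Cohen-Macaulay follows from the exact sequence together with the reflexivity (hence Cohen-Macaulayness) of $\ccR$ and $L$, and $C$ is generically lci because $\ccR$ is locally free off a 0-dimensional set, where the extension is the Koszul resolution of a regular section.

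For the backward direction $(C,\xi)\mapsto (\ccR,\sigma)$, the local-to-global Ext spectral sequence for $\Ext^\bullet(I_C,L)$, combined with $H^1(L)=H^2(L)=0$ and the vanishing of $\ext^{\geq 2}(I_C,L)$, yields a canonical isomorphism $\Ext^1(I_C,L)\cong H^0(\omega_C\otimes\omega_X^{-1}\otimes L)$. Hence $\xi$ determines an extension $0\to L \to \ccR \to I_C\to 0$, and the composition $\ccR\twoheadrightarrow I_C\hookrightarrow \O_X$ is a cosection $\sigma$ with image $I_C$. The essential step is to verify that $\ccR$ is reflexive: outside $C$ this is clear, while at $x\in C$ one argues locally, using that the stalk $\ccR_x$ is free precisely when $C_x$ is lci and $\xi_x$ generates the stalk $(\omega_C\otimes\omega_X^{-1}\otimes L)_x$ as an $\O_{X,x}$-module. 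The hypotheses that $C$ is generically lci and $\coker(\xi)$ is 0-dimensional together ensure that $\ccR$ fails to be locally free only on a 0-dimensional subset, so $\ccR$ is reflexive.

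That the constructions are mutually inverse is then formal: the extension class of $0\to L\to \ccR\to I_C\to 0$ corresponds to $\xi$ under the canonical isomorphism of the previous paragraph, and the cosection is by construction the composite through $I_C$. I expect the main obstacle to be the local analysis in the backward direction, i.e.\ making precise the local criterion on $\xi_x$ for the Serre extension to be locally free at $x\in C$, and confirming that the generic lci assumption combined with the 0-dimensional cokernel condition are exactly what is needed for reflexivity at every point. All other steps reduce to formal manipulations with the Ext spectral sequence and Grothendieck duality.
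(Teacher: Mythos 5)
Your forward direction $(\ccR,\sigma)\mapsto(C,\xi)$ is essentially the argument the paper itself sketches (the paper does not prove this theorem but quotes it from Hartshorne \cite{Har2}, only recalling the direction you treat first): the kernel of $\ccR\twoheadrightarrow I_C$ is a line bundle isomorphic to $\det(\ccR)$, dualizing the Koszul-type sequence and using $\ext^1(I_C,L)\cong\ext^2(\O_C,L)\cong\omega_C\otimes\omega_X^{-1}\otimes L$ together with the local-to-global spectral sequence and $H^1(L)=H^2(L)=0$ produces $\xi$ with 0-dimensional cokernel $\ext^1(\ccR,L)$. Two small cautions there: a rank 2 reflexive sheaf on a 3-fold is \emph{not} Cohen--Macaulay in general (at a singular point its depth is $2$, not $3$); what you actually need, and what reflexivity gives via homological dimension $\leq 1$ and Auslander--Buchsbaum, is $\mathrm{depth}\,\ccR_x\geq 2$, which then yields $\mathrm{depth}\,I_{C,x}\geq 2$ and hence $\mathrm{depth}\,\O_{C,x}\geq 1$, i.e.\ $C$ Cohen--Macaulay.

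The genuine gap is in the backward direction, at exactly the step you flag as the ``main obstacle'' and then dispose of too quickly. The inference ``$\ccR$ fails to be locally free only on a 0-dimensional subset, so $\ccR$ is reflexive'' is false as a general principle: the ideal sheaf of a point in $X$ is torsion free and locally free away from a single point, yet is not reflexive (its double dual is $\O_X$). Local freeness in codimension $\leq 2$ (which your lci-plus-generation criterion does give at the generic points of $C$) only shows that $\ccR^{**}/\ccR$ is a 0-dimensional sheaf; it does not show it vanishes. What is missing is the argument at the finitely many bad points, and this is precisely where the Cohen--Macaulay hypothesis on $C$ enters: at a closed point $x$ one has $\mathrm{depth}\,\O_{C,x}=1$, hence $\mathrm{depth}\,I_{C,x}\geq 2$ from $0\to I_C\to\O_X\to\O_C\to 0$, and then the depth lemma applied to $0\to L\to\ccR\to I_C\to 0$ gives $\mathrm{depth}\,\ccR_x\geq 2$; combined with torsion-freeness and local freeness in codimension $\leq 2$ this verifies the $S_2$-type criterion for reflexivity on a smooth 3-fold. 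Without some such argument (or a direct appeal to Hartshorne's Theorem 4.1 in \cite{Har2}, which is where both the existence of the extension and its reflexivity are established), your construction only produces a torsion free sheaf with the right double dual, not the reflexive sheaf required by the statement.
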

In this theorem, $\omega_C \otimes \omega_{X}^{-1} \otimes L$ is a 1-dimensional pure sheaf. This can be seen by noting $\omega_C \cong \ext^2(\O_C,\omega_X)$ \cite[Thm.~21.5]{Eis} and using \cite[p.~6]{HL}. Hence $(\omega_C \otimes \omega_{X}^{-1} \otimes L,\xi)$ is a PT pair in the sense of \cite{PT1}. We introduce some notation. For any sheaf $\cE$ on $X$ with support of codimension $c$ and any line bundle $M$ on $X$
\begin{equation} \label{D}
(\cE)^{D}_{M} := \ext^c(\cE,M).
\end{equation}
Then $(\cE)^D_{\omega_X}$ is the usual notion of a dual sheaf $\cE^D$ of \cite[Def.~1.1.7]{HL}. Moreover
$$
\omega_C \otimes \omega_{X}^{-1} \otimes L \cong (\O_C)^{D}_{L}.
$$

We now briefly recall how the Serre correspondence works (following \cite{Har2} which discusses the case $X = \PP^3$ but straightforwardly generalizes). Suppose we are given data $(\ccR,\sigma)$ as in (1). Then the cosection induces a surjection $\ccR \twoheadrightarrow I_C$ and a short exact sequence
\begin{equation} \label{Koszul}
0 \longrightarrow L \longrightarrow \ccR \longrightarrow I_C \longrightarrow 0,
\end{equation}
where the kernel $L \cong \det(\ccR)$ is a line bundle. See \cite{Har2} for details. Therefore we obtain an extension $\xi \in \Ext^1(I_C,L)$. Note that $\ccR$ is locally free outside a 0-dimensional closed subscheme so $C$ is generically lci. Outside the lci locus $C$ is Cohen-Macaulay. Using the short exact sequence 
\begin{equation} \label{IC}
0 \longrightarrow I_C \longrightarrow \O_X \longrightarrow \O_C \longrightarrow 0,
\end{equation}
we obtain isomorphisms
$$
\ext^1(I_C,L) \cong \ext^2(\O_C, L) \cong \omega_C \otimes \omega_{X}^{-1} \otimes L =: (\O_C)^{D}_{L},
$$
where we used definition \eqref{D}. Using the local-to-global spectral sequence and $H^1(L) =H^2(L) = 0$, we deduce
$$
\Ext^1(I_C,L) \cong H^0(\ext^1(I_C,L)) \cong H^0((\O_C)^{D}_{L}).
$$
This gives the section $\xi : \O_X \rightarrow (\O_C)^{D}_{L}$. Applying $\hom(\cdot, L)$ to \eqref{Koszul} gives
\begin{equation} \label{Hartshornelong}
0 \longrightarrow L \longrightarrow \ccR^* \otimes L \longrightarrow \O_X \stackrel{\delta}{\longrightarrow} \ext^1(I_C,L) \longrightarrow \ext^1(\ccR,L) \longrightarrow 0, 
\end{equation}
where $\delta$ sends $1$ to $\xi$. By a slight abuse of notation, we denote $\delta$ by $\xi$ as well. Finally, $\ext^1(\ccR,L)$ is a 0-dimensional sheaf which is supported on the locus where $\ccR$ is not locally free and the length of this sheaf equals $c_3(\ccR)$ (see \cite{Har2} for details). We indeed obtain a PT pair $((\O_C)^{D}_{L},\xi)$, which we refer to as a \emph{Hartshorne PT pair}. This shows how to go from (1) to (2) in Theorem \ref{Serre}. The way backwards is explained in \cite{Har2}.

\subsection{Moduli of sheaves} Our initial moduli space is $\Quot(\ccR,n)$ of the introduction. We also define $$\Quot(\ccR) := \bigsqcup_{n=0}^{\infty} \Quot(\ccR,n).$$ Set theoretically one can write $\Quot(\ccR)$ as
$$
\bigsqcup_{\cQ \in \T} \Hom(\ccR,\cQ)^{\onto}/ \sim,
$$
where $\T$ denotes the stack of all 0-dimensional sheaves on $X$, ``onto'' refers to the subset of surjective maps in 
\begin{equation} \label{extpure1}
\Hom(\ccR,\cQ) \cong \Ext^1(\ccR,\cQ[-1]),
\end{equation} 
and the equivalence $\sim$ is induced from the automorphisms of $\cQ$. In Section \ref{PTmoduli} we define the ``PT analog'' of $\Quot(\ccR)$ (\eqref{setdef} below)
$$
\bigsqcup_{\cQ \in \T} \Ext^2(\cQ,\ccR)^{\pure}/ \sim 
$$
where 
$$
\Ext^2(\cQ,\ccR) \cong \Ext^1(\cQ[-1],\ccR).
$$

\subsection{Ext groups} Although $\Ext^1(\ccR,\cQ[-1])$ and $\Ext^1(\cQ[-1], \ccR)$ can jump, their difference cannot. This is the analog of \cite[Lem.~4.10]{ST}:
\begin{lemma} \label{lem1}
For any rank 2 reflexive sheaf $\ccR$ and 0-dimensional sheaf $\cQ$ on a smooth projective 3-fold $X$, the only Ext groups between $\ccR$ and $\cQ[-1]$ are $\Ext^1(\ccR,\cQ[-1])$, $\Ext^1(\cQ[-1], \ccR)$. Moreover
$$
\dim \Ext^1(\ccR,\cQ[-1]) - \dim \Ext^1(\cQ[-1],\ccR) = \chi(\ccR,\cQ) =  2 \ell(\cQ),
$$ 
where $\ell(\cQ)$ denotes the length of $\cQ$.
\end{lemma}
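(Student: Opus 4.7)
\emph{Proof plan.} The plan is to combine the homological-dimension bound on $\ccR$ with Grothendieck--Serre duality on the smooth projective $3$-fold $X$. By Remark~\ref{Hilbremark}, $\ccR$ admits a two-term locally free resolution $0\to\cE_1\to\cE_0\to\ccR\to 0$, so applying $\Hom(-,\cQ)$ immediately yields $\Ext^i(\ccR,\cQ)=0$ for $i\geq 2$. After the shift,
$$\Ext^i(\ccR,\cQ[-1])=\Ext^{i-1}(\ccR,\cQ)=0\quad\text{unless }i\in\{1,2\},$$
with $\Ext^1(\ccR,\cQ[-1])=\Hom(\ccR,\cQ)$.

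For the opposite direction I would invoke Serre duality on $X$,
$$\Ext^i(\cQ,\ccR)\;\cong\;\Ext^{3-i}(\ccR,\cQ\otimes\omega_X)^*.$$
Because $\cQ$ is $0$-dimensional, $\omega_X$ is (non-canonically) trivialized along its finite support, so $\cQ\otimes\omega_X\cong\cQ$ as abstract coherent sheaves. The previous vanishing then forces $\Ext^i(\cQ,\ccR)=0$ unless $i\in\{2,3\}$, and after shifting $\Ext^i(\cQ[-1],\ccR)=\Ext^{i+1}(\cQ,\ccR)$ vanishes for $i\notin\{1,2\}$, with $\Ext^1(\cQ[-1],\ccR)\cong\Ext^1(\ccR,\cQ)^*$. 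The surviving degree-$2$ contributions on each side are Serre dual to the degree-$1$ contributions from the opposite direction and so have matching dimensions; this is the sense in which the $\Ext^1$ groups are the only genuinely independent Ext groups between $\ccR$ and $\cQ[-1]$.

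The dimension identity then reduces to an Euler-characteristic computation. Using $\Ext^{\geq 2}(\ccR,\cQ)=0$,
$$\chi(\ccR,\cQ)=\dim\Hom(\ccR,\cQ)-\dim\Ext^1(\ccR,\cQ),$$
while Hirzebruch--Riemann--Roch together with $\ch(\cQ)=\ell(\cQ)\,[\pt]$ gives $\chi(\ccR,\cQ)=\rk(\ccR)\cdot\ell(\cQ)=2\ell(\cQ)$, since only the rank of $\ccR^\vee$ pairs non-trivially with $[\pt]$ against $\td(X)$. Combining with the identifications from the previous paragraphs,
$$\dim\Ext^1(\ccR,\cQ[-1])-\dim\Ext^1(\cQ[-1],\ccR)=\dim\Hom(\ccR,\cQ)-\dim\Ext^1(\ccR,\cQ)=2\ell(\cQ).$$
The only subtlety I foresee is the non-canonical identification $\cQ\otimes\omega_X\cong\cQ$ inside Serre duality; since the statement concerns only dimensions, this abstract local trivialization along the $0$-dimensional support of $\cQ$ is enough.
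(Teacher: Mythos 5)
Your proposal is correct and follows essentially the same route as the paper's proof: homological dimension $\leq 1$ of $\ccR$ (via the two-term resolution) to kill $\Ext^{\geq 2}(\ccR,\cQ)$, Serre duality combined with $\cQ\otimes\omega_X\cong\cQ$ for $0$-dimensional $\cQ$ to identify $\Ext^1(\cQ[-1],\ccR)\cong\Ext^1(\ccR,\cQ)^*$, and then the Euler characteristic $\chi(\ccR,\cQ)=2\ell(\cQ)$. The only additions are harmless elaborations the paper leaves implicit (the Riemann--Roch computation of $\chi(\ccR,\cQ)$ and the remark that the degree-$2$ Ext groups are Serre dual to the opposite degree-$1$ ones).
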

\begin{proof}
Since $\ccR$ is reflexive, it has homological dimension $\leq 1$ (\cite[Prop.~1.3]{Har2} and Auslander-Buchsbaum). Hence $\Ext^{\geq 2}(\ccR,\cQ) = 0$ and
\begin{align*}
2\ell(\cQ) = \chi(\ccR,\cQ) &= \dim \Hom(\ccR,\cQ) - \dim \Ext^1(\ccR,\cQ) \\
&= \dim \Ext^1(\ccR,\cQ[-1]) - \dim \Ext^2(\cQ,\ccR) \\ 
&= \dim \Ext^1(\ccR,\cQ[-1]) - \dim \Ext^1(\cQ[-1],\ccR), 
\end{align*}
where we use Serre duality and $\cQ \otimes \omega_{X}^{-1} \cong \cQ$ since $\cQ$ is 0-dimensional.
\end{proof}

\subsection{Moduli of PT pairs} \label{PTmoduli} In this section we fix data $(\ccR,\sigma)$ or, equivalently, $((\O_C)^{D}_{L},\xi)$ as in the Serre correspondence. For $\cQ \in \T$, we want to define $\Ext^2(\cQ,\ccR)^{\pure} \subset \Ext^2(\cQ,\ccR)$. In the rank one case of Stoppa-Thomas \cite{ST}
$$
\Ext^1(\cQ[-1],I_C) \cong \Ext^2(\cQ,I_C) \cong \Ext^1(\cQ,\O_C),
$$
where the second isomorphism is induced by \eqref{IC}. Elements of $\Ext^1(\cQ,\O_C)$ are short exact sequences
$$
0 \rightarrow \O_C \rightarrow F \rightarrow \cQ \rightarrow 0,
$$
where $F$ is necessarily 1-dimensional. Define $\Ext^1(\cQ,\O_C)^{\pure}$ as the locus of extensions for which $F$ is pure. Then elements of $\Ext^1(\cQ,\O_C)^{\pure}$ are exactly the PT pairs of \cite{PT1} for which the underlying Cohen-Macaulay support curve is $C$.
\begin{lemma} \label{lem2}
Let $\ccR$ be a rank 2 reflexive sheaf on a smooth projective 3-fold $X$ and let $\sigma : \ccR \rightarrow \O_X$ be a cosection cutting out a 1-dimensional closed subscheme $C \subset X$. Then for any 0-dimensional sheaf $\cQ$ on $X$, there exists a natural inclusion
$$
\Ext^1(\cQ[-1],\ccR) \hookrightarrow \Ext^1(\cQ[-1],I_C).
$$
\end{lemma}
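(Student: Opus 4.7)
The plan is to deduce the inclusion directly from the Serre-correspondence exact sequence \eqref{Koszul},
$$0 \longrightarrow L \longrightarrow \ccR \longrightarrow I_C \longrightarrow 0,$$
which is canonically associated to the cosection $\sigma$. Applying the functor $\Hom(\cQ[-1], -)$ produces a long exact sequence whose relevant fragment is
$$\Ext^1(\cQ[-1], L) \longrightarrow \Ext^1(\cQ[-1], \ccR) \longrightarrow \Ext^1(\cQ[-1], I_C).$$
Unfolding the shift, the leftmost term equals $\Ext^2(\cQ, L)$, so it suffices to show that this group vanishes; the second map in the sequence will then give the desired canonical injection.

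To verify $\Ext^2(\cQ, L) = 0$, I would argue locally. Since $L$ is locally free, $\ext^i(\cQ, L) \cong \ext^i(\cQ, \O_X) \otimes L$. Because $\cQ$ is 0-dimensional on a smooth 3-fold, it is automatically Cohen-Macaulay of codimension $3$, so standard local duality (or equivalently Auslander-Buchsbaum applied to a minimal resolution stalkwise) yields $\ext^i(\cQ, \O_X) = 0$ for $i \neq 3$. Hence each $\ext^i(\cQ, L)$ is itself 0-dimensional (or zero), so its higher cohomology vanishes and the local-to-global spectral sequence degenerates to
$$\Ext^i(\cQ, L) \cong H^0\bigl(\ext^i(\cQ, L)\bigr),$$
which vanishes for $i \neq 3$. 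In particular $\Ext^2(\cQ, L) = 0$, giving the lemma.

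I do not anticipate a genuine obstacle here. Once one recognizes that the cosection canonically supplies the Koszul-type sequence \eqref{Koszul} (this is precisely the content of the Hartshorne-Serre direction reviewed in Section \ref{Serresection}), the statement reduces to the codimension-3 Ext vanishing against a locally free sheaf, which is standard. The single point to be careful about is the index bookkeeping between $\cQ$ and $\cQ[-1]$, so that what one actually needs is vanishing of $\Ext^2(\cQ, L)$, not of $\Ext^1(\cQ, L)$ or $\Ext^3(\cQ, L)$. Naturality of the inclusion is built in, since the map is induced by the fixed surjection $\ccR \twoheadrightarrow I_C$ attached to $\sigma$.
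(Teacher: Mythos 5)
Your argument is correct and follows essentially the same route as the paper: apply $\Hom(\cQ,\cdot)$ (equivalently $\Hom(\cQ[-1],\cdot)$) to the sequence \eqref{Koszul} and kill the term $\Ext^2(\cQ,L)$. The only cosmetic difference is how that vanishing is checked — the paper uses Serre duality, $\Ext^2(\cQ,L)\cong \Ext^1(L,\cQ\otimes\omega_X)^*\cong H^1$ of a 0-dimensional sheaf $=0$, while you use the codimension-3 vanishing of $\ext^i(\cQ,\O_X)$ together with the local-to-global spectral sequence; both are standard and equally valid.
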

\begin{proof}
This follows by applying $\Hom(\cQ,\cdot)$ to the short exact sequence \eqref{Koszul}
$$
\cdots \longrightarrow \Ext^2(\cQ,L) \longrightarrow \Ext^2(\cQ,\ccR) \longrightarrow \Ext^2(\cQ,I_C) \longrightarrow \cdots,
$$
where $\Ext^2(\cQ,L) \cong \Ext^1(L,\cQ)^* \cong H^1(\cQ)^* \cong 0$ since $\cQ$ is 0-dimensional.
\end{proof}
Using the embedding of Lemma \ref{lem2}, we can view $$\Ext^2(\cQ,\ccR) \subset \Ext^1(\cQ,\O_C)$$ as a sublocus and define
\begin{equation*} 
\Ext^2(\cQ,\ccR)^{\pure} :=  \Ext^2(\cQ,\ccR) \cap \Ext^1(\cQ,\O_C)^{\pure}.
\end{equation*}
Note that this definition depends not only on $\ccR$ but \emph{also} on the chosen cosection $\sigma : \ccR \rightarrow \O_X$. At the level of sets we consider
\begin{equation} \label{setdef}
\bigsqcup_{\cQ \in \T} \Ext^2(\cQ,\ccR)^{\pure}/ \sim,
\end{equation} 
where again the equivalence $\sim$ is induced from the automorphisms of $\cQ$. By construction, this is a sublocus of the moduli space of all PT pairs. We will now prove that the points of \eqref{setdef} are in bijective correspondence with the closed points of a certain Quot scheme of a 0-dimensional sheaf. First a useful observation:
\begin{remark} \label{dualizing}
Let $\cQ$ be any 0-dimensional sheaf on $X$, $F$ any 1-dimensional sheaf on $X$, and $L$ any line bundle on $X$. Then $\cQ$ is automatically pure and $F$ is pure if and only if $\ext^3(F,\O_X) = 0$ by \cite[Prop.~1.1.10]{HL}. Suppose $F$ is pure then, again by \cite[Prop.~1.1.10]{HL}, 
\begin{equation} \label{DD}
(F^{D}_{L})^{D}_{L} \cong F, \ (\cQ^{D}_{L})^{D}_{L} \cong \cQ.
\end{equation}
Suppose we are given any PT pair
\begin{equation} \label{pair}
0 \longrightarrow \O_C \longrightarrow F \longrightarrow \cQ \longrightarrow 0.
\end{equation}
Then dualizing using \cite[Prop.~1.1.6]{HL} gives a short exact sequence
\begin{equation} \label{dualpair}
0 \longrightarrow F^{D}_{L} \longrightarrow (\O_C)^{D}_{L} \longrightarrow \cQ^{D}_{L} \longrightarrow 0,
\end{equation}
where $F^{D}_{L}$ is pure and $\cQ^{D}_{L}$ is 0-dimensional. When $F = (\O_C)^{D}_{L}$, e.g.~when $(F,s)$ is a Hartshorne PT pair, \eqref{dualpair} defines a PT pair as well by \eqref{DD}. So the dual of a Hartshorne PT pair $((\O_C)^{D}_{L},\xi)$ is a PT pair $((\O_C)^{D}_{L},\xi^{D}_{L})$. In general, dualizing \eqref{dualpair} again gives back \eqref{pair}. Hence the operation $(\cdot)^{D}_{L}$ is reversible on PT pairs and defines an involution on PT pairs with $F = (\O_C)^{D}_{L}$.
\end{remark}

\begin{theorem} \label{alt}
Let $\ccR$ be a rank 2 reflexive sheaf on a smooth projective 3-fold $X$. Suppose $H^1(\det(\ccR)) =H^2(\det(\ccR)) = 0$ and $\sigma : \ccR \rightarrow \O_X$ is a cosection cutting out a 1-dimensional closed subscheme $C \subset X$.
Then the elements of \eqref{setdef} are in natural bijective correspondence with the closed points of $\Quot(\ext^1(\ccR,\O_X))$.
\end{theorem}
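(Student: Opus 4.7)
The plan is to identify elements of \eqref{setdef} with \emph{sub-PT pairs} of the Hartshorne PT pair $((\O_C)^{D}_{L},\xi)$---subsheaves $G\subset (\O_C)^{D}_{L}$ containing $\xi(\O_X)=\O_C$---and then to match such $G$ with quotients of $\ext^1(\ccR,\O_X)$.

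Given $0\to\O_C\to F'\to\cQ\to 0$ representing an element of $\Ext^2(\cQ,\ccR)^{\pure}$ with extension class $\eta$, I first apply the duality $(\cdot)^{D}_{L}$ of Remark \ref{dualizing} to obtain $0 \to (F')^{D}_{L}\to (\O_C)^{D}_{L} \to \cQ^{D}_{L} \to 0$. Recall from \eqref{Hartshornelong} that $\xi:\O_X\to (\O_C)^{D}_{L}$ has image $\O_C$ and cokernel $\ext^1(\ccR,L)$. The central claim is that the condition $\eta\in \Ext^2(\cQ,\ccR)\hookrightarrow \Ext^2(\cQ,I_C)\cong \Ext^1(\cQ,\O_C)$ is equivalent to $\xi$ factoring through the subsheaf $(F')^{D}_{L}\subset (\O_C)^{D}_{L}$, equivalently to the composition $\O_X\stackrel{\xi}{\to}(\O_C)^{D}_{L}\twoheadrightarrow \cQ^{D}_{L}$ being zero.

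To verify the claim, apply $\Hom(\cQ,\cdot)$ to \eqref{Koszul}. Since $\Ext^i(\cQ,L)=0$ for $i=1,2$ by Serre duality and 0-dimensionality of $\cQ$, the image of $\Ext^2(\cQ,\ccR)$ in $\Ext^2(\cQ,I_C)$ is precisely the kernel of the connecting map $\delta:\Ext^2(\cQ,I_C)\to \Ext^3(\cQ,L)$, which is Yoneda cup product with $[\ccR]\in \Ext^1(I_C,L)$. Using $H^1(L)=H^2(L)=0$ together with the local-to-global spectral sequences (for $\O_C$ Cohen-Macaulay of codimension 2 and $\cQ$ of codimension 3), there are canonical isomorphisms $\Ext^1(I_C,L)\cong \Ext^2(\O_C,L)\cong H^0((\O_C)^{D}_{L})$ carrying $[\ccR]\mapsto\xi$, and $\Ext^3(\cQ,L)\cong H^0(\cQ^{D}_{L})=\Hom(\O_X,\cQ^{D}_{L})$. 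Under these identifications, the Yoneda product $\eta\cup\xi$ corresponds to the composition $\O_X\to (\O_C)^{D}_{L}\twoheadrightarrow \cQ^{D}_{L}$: this is $R\Hom(\cdot,L)$ applied to the derived composition $\cQ\stackrel{\eta}{\to}\O_C[1]\stackrel{\xi[1]}{\to}L[3]$ in $D^b(X)$. Hence $\delta(\eta)=0$ iff $\xi$ lifts through $(F')^{D}_{L}$.

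With the claim established, the involution $F'\leftrightarrow G:=(F')^{D}_{L}$ of Remark \ref{dualizing} gives a bijection between elements of \eqref{setdef} and subsheaves $G\subset (\O_C)^{D}_{L}$ containing $\O_C$. Such $G$ are determined by subsheaves $G/\O_C\subset (\O_C)^{D}_{L}/\O_C=\ext^1(\ccR,L)$, and for a 0-dimensional sheaf subsheaves and quotients are in bijection (by taking cokernel). Since $\ext^1(\ccR,L)\cong \ext^1(\ccR,\O_X)$ as abstract 0-dimensional sheaves, the result is a bijection with closed points of $\Quot(\ext^1(\ccR,\O_X))$, sending $(F',\cQ)$ to the quotient $\ext^1(\ccR,\O_X)\twoheadrightarrow \cQ^{D}_{L}$ of length $\ell(\cQ)$; the equivalence by $\Aut(\cQ)$ transports to the equivalence by $\Aut(\cQ^{D}_{L})$ under $(\cdot)^{D}_{L}$. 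The main obstacle is the derived-category identification in the second paragraph---verifying that the Yoneda obstruction for $\eta$ to come from $\Ext^2(\cQ,\ccR)$ coincides with the obstruction to lifting $\xi$ through $(F')^{D}_{L}$---after which the remaining steps amount to bookkeeping with $(\cdot)^{D}_{L}$-duality.
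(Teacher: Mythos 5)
Your proposal is correct and follows essentially the paper's own argument: membership in the image of the inclusion of Lemma \ref{lem2} is detected by the connecting map to $\Ext^3(\cQ,L)$ (Yoneda product with $\xi$), identified via the local-to-global spectral sequence, the Cohen-Macaulayness of $C$, and $H^1(L)=H^2(L)=0$ with the vanishing of the composition $\O_X \xrightarrow{\xi} (\O_C)^{D}_{L} \to \cQ^{D}_{L}$, after which the duality of Remark \ref{dualizing} converts such pure pairs into quotients of $\coker(\xi)=\ext^1(\ccR,L)\cong\ext^1(\ccR,\O_X)$, exactly as in Steps 1--3 and diagram \eqref{Amindiag0} of the paper. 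The only cosmetic difference is that you impose purity at the outset and dualize immediately, whereas the paper first characterizes purity as surjectivity of the factored map $\ext^1(\ccR,L)\to\ext^3(\cQ,L)$ (its Step 2); the content is the same.
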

\begin{proof}
\textbf{Step 1:} We are given the short exact sequence
\begin{equation} \label{given}
0 \longrightarrow L \longrightarrow \ccR \longrightarrow I_C \longrightarrow 0
\end{equation}
and the corresponding Hartshorne PT pair $((\O_C)^{D}_{L},\xi)$. Let $\cQ \in \T$. An element of $\Ext^1(\cQ[-1],\ccR)$ corresponds to an exact triangle 
$$
\ccR \rightarrow A^\mdot \rightarrow \cQ[-1].
$$
Then the image under the natural inclusion $\Ext^1(\cQ[-1],\ccR) \subset \Ext^1(\cQ[-1],I_C)$ of Lemma \ref{lem2} is the induced third column of the following diagram
\begin{displaymath}
\xymatrix
{
L \ar[r] \ar@{=}[d] & \ccR \ar[r] \ar[d] & I_C \ar@{-->}[d] \\
L \ar[r] & A^\mdot \ar[r] \ar[d] & I^\mdot \ar@{-->}[d] \\
& \cQ[-1] \ar@{=}[r] & \cQ[-1].
}
\end{displaymath}
More precisely, an element $I_C \rightarrow I^\mdot \rightarrow \cQ[-1]$ of $\Ext^1(\cQ[-1],I_C)$ lies in the image of the natural inclusion of Lemma \ref{lem2} if and only if there exists a map $I^\mdot \rightarrow L[1]$ such that the following diagram (not an exact triangle) commutes
\begin{displaymath}
\xymatrix
{
I_C \ar[r] \ar[dr] & I^\mdot \ar@{-->}^<<<<{\exists}[d] \\
& L[1],
}
\end{displaymath}
where the diagonal map comes from \eqref{given}. This can be rephrased: the element $I_C \rightarrow I^\mdot \rightarrow \cQ[-1]$ gives rise to an exact sequence $\Ext^1(I^\mdot, L) \rightarrow \Ext^1(I_C,L) \rightarrow \Ext^3(\cQ,L)$ and the above condition is equivalent to 
$$
\xi \in \mathrm{im}\Big( \Ext^1(I^\mdot, L) \rightarrow \Ext^1(I_C,L) \Big),
$$
where $\xi \in \Ext^1(I_C,L)$ is the given extension \eqref{given}. By exactness, this is equivalent to 
\begin{equation} \label{kercon} 
\xi \in \ker\Big( \Ext^1(I_C,L) \rightarrow \Ext^3(\cQ,L) \Big).
\end{equation}
Next we note that $\ext^i(I_C,L) \cong \ext^{i+1}(\O_C,L)$ for all $i>0$. Moreover, since $C$ is Cohen-Macaulay, $\ext^i(\O_C,L) = 0$ unless $i=2$. By the local-to-global spectral sequence and $H^1(L)=H^2(L)=0$ 
\begin{align*}
\Ext^1(I_C,L) &\cong H^0(\ext^1(I_C,L)), \\
\Ext^3(\cQ,L) &\cong H^0(\ext^3(\cQ,L)).
\end{align*}
Hence an element $I_C \rightarrow I^\mdot \rightarrow \cQ[-1]$ of $\Ext^1(\cQ[-1],I_C)$ lies in the image of the natural inclusion of Lemma \ref{lem2} if and only if the composition
$$
\O_X \stackrel{\xi}{\longrightarrow} \ext^1(I_C,L) \longrightarrow \ext^3(\cQ,L)
$$
is zero. Here the map $\ext^1(I_C,L) \rightarrow \ext^3(\cQ,L)$ is induced by $I_C \rightarrow I^\mdot \rightarrow \cQ[-1]$. This in turn is equivalent to the existence of a morphism
$$
\coker(\xi) = \ext^1(\ccR,L) \longrightarrow \ext^3(\cQ,L) 
$$
for which the triangle in the following diagram commutes
\begin{equation} \label{factor}
\begin{gathered}
\xymatrix
{
\O_X \ar^<<<<{\xi}[r] & \ext^1(I_C,L) \ar[dr] \ar[r] & \ext^1(\ccR,L) \ar@{-->}^<<<<{\exists}[d] \\
& & \ext^3(\cQ,L).
}
\end{gathered}
\end{equation}
Here the first line is the Hartshorne PT pair \eqref{Hartshornelong}. \\

\noindent \textbf{Step 2:} We claim that an element in the image of $$\Ext^1(\cQ[-1],\ccR) \subset \Ext^1(\cQ[-1],I_C) \cong \Ext^1(\cQ,\O_C)$$ lies in $\Ext^1(\cQ,\O_C)^{\pure}$ if and only if the arrow in \eqref{factor} is a surjection. Let $I_C \rightarrow I^\mdot \rightarrow \cQ[-1]$ be an element of $\Ext^1(\cQ[-1],I_C)$. We get an exact sequence
$$
\cdots \longrightarrow \ext^1(I_C,L) \longrightarrow \ext^3(\cQ,L) \longrightarrow \ext^2(I^\mdot, L) \longrightarrow 0,
$$
where we use $\ext^2(I_C,L) \cong \ext^3(\O_C,L) = 0$ since $C$ is Cohen-Macaulay. Moreover, writing $I^\mdot = \{\O_X \rightarrow F\}$ we have an exact triangle $I^\mdot \rightarrow \O_X \rightarrow F$. Applying $\hom(\cdot,\O_X)$ to this exact triangle induces an isomorphism
$$
\ext^2(I^\mdot,\O_X) \cong \ext^3(F,\O_X).
$$
The claim follows because $F$ is pure if and only if $\ext^3(F,\O_X) = 0$. \\

\noindent \textbf{Step 3:} We rephrase the result of Step 2. Given a PT pair $I^\mdot = \{\O_X \rightarrow F\}$, we can form the following diagram
\begin{equation} \label{Amindiag0}
\begin{gathered}
\xymatrix
{
& & 0 \ar[d] & & \\
& & \O_C \ar^>>>>>{\xi}[d] & & \\
0 \ar[r] & F^{D}_{L} \ar[r] & (\O_C)^{D}_{L} \ar[r] \ar[d] & \cQ^{D}_{L} \ar[r] \ar@{=}[d] & 0 \\
& & \cQ_\xi \ar[d] \ar@{-->>}^<<<<<{\exists}[r] & \cQ^{D}_{L}, & \\
& & 0 & &
}
\end{gathered}
\end{equation}
where the middle column is the Hartshorne PT pair and the middle row is the dual of the given PT pair. In particular $\cQ_\xi := \coker(\xi) = \ext^1(\ccR,L)$. Then $I^\mdot$ lies in $\Ext^2(\cQ,\ccR)^{\pure}$ if and only if the indicated surjection exists. The surjection produces the desired element of $\Quot(\ext^1(\ccR,L))$. Since dualizing is a reversible operation (Remark \ref{dualizing}), we obtain a bijective map to $\Quot(\ext^1(\ccR,L))$.
\end{proof}

\begin{remark} [Sub-PT pairs] \label{PTview}
One does not actually have to stop at Step 3 of the previous proof. Fix the data $(\ccR,\sigma)$ as in the Serre Correspondence. Then we are also given a Hartshorne PT pair $((\O_C)^{D}_{L},\xi)$, whose cokernel we denote by $\cQ_\xi$. As we observed in Remark \ref{dualizing}, a special feature of the Hartshorne PT pair is that its dual is also a PT pair which we denote by $((\O_C)^{D}_{L},\xi^{D}_{L})$. We introduce the ``moduli space''\footnote{Since we only deal with Euler characteristics, we only observe $P_n(C,\xi)$ is a constructible subset of the usual space of PT pairs.} $P_n(C,\xi)$
$$
\Big\{ (F,s) \ : \ (F,s) \ \mathrm{is \ a \ PT \ pair \ with \ } (F,s) \subset ((\O_C)^{D}_{L},\xi^{D}_{L}) \ \mathrm{and} \ \chi(F) = \chi(\O_C) + n \Big\},
$$
 where $(F,s) \subset ((\O_C)^{D}_{L},\xi^{D}_{L})$ means there exists an embedding $F \hookrightarrow (\O_C)^{D}_{L}$ such that the following diagram commutes
 \begin{displaymath}
 \xymatrix
 {
 \O_X \ar^{s}[r] \ar_{\xi^{D}_{L}}[dr] & F \ar@^{(->}[d] \\
 & (\O_C)^{D}_{L}.
 }
 \end{displaymath}
In words: $\bigsqcup_n P_{n}(C,\xi)$ consists of sub-PT pairs of the dual of the Hartshorne PT pair. We claim Theorem \ref{main} implies:
\begin{equation} \label{PTformula}
\sum_{n=0}^{\infty} e(\Quot(\ccR,n)) \, q^n = M(q)^{2e(X)} \sum_{n=0}^{\infty} e(P_n(C,\xi)) \, q^n.
\end{equation}
We now prove this formula. Use the notation of Step 3 of the previous proof. If we have a surjection as indicated, diagram \eqref{Amindiag0} can be completed to
\begin{equation} \label{Amindiag1}
\begin{gathered}
\xymatrix
{
& 0 \ar[d] & 0 \ar[d] & & \\
& \O_C \ar@{=}[r] \ar[d] & \O_C \ar^>>>>>{\xi}[d] & & \\
0 \ar[r] & F^{D}_{L} \ar[r] \ar[d] & (\O_C)^{D}_{L} \ar[r] \ar[d] & \cQ^{D}_{L} \ar[r] \ar@{=}[d] & 0 \\
0 \ar[r] & \S \ar[d] \ar[r] & \cQ_\xi \ar[d] \ar[r] & \cQ^{D}_{L} \ar[r] & 0 \\
& 0 & 0 & &
}
\end{gathered}
\end{equation}
where $\S$ denotes the kernel of $\cQ_\xi \twoheadrightarrow \cQ^{D}_{L}$. Dualizing diagram \eqref{Amindiag1} yields
\begin{equation} \label{Amindiag2}
\begin{gathered}
\xymatrix
{
& & 0 \ar[d] & 0 \ar[d] & & \\
& 0 \ar[r] & \O_C \ar[r] \ar^>>>>>{\xi^{D}_{L}}[d] & F \ar[r] \ar[d] & \cQ \ar[r] & 0 \\
& & (\O_C)^{D}_{L} \ar@{=}[r] \ar[d] & (\O_C)^{D}_{L} \ar[d] & & \\
0 \ar[r] & \cQ \ar[r] & (\cQ_\xi)^{D}_{L} \ar[r] \ar[d] & \S^{D}_{L} \ar[r] \ar[d] & 0 & \\
& & 0 & 0 & &
}
\end{gathered}
\end{equation}
The top square gives rise to the description of $P_n(C,\xi)$. Dualizing this diagram we can go back to \eqref{Amindiag1} and \eqref{Amindiag0} (Remark \ref{dualizing}). Once we establish Theorem \ref{main} (done in the next section), formula \eqref{PTformula} follows from the bijection
$$
\bigsqcup_{n=0}^{\infty} \Quot(\ext^1(\ccR,\O_X),n) \leftrightarrow \bigsqcup_{n=0}^{\infty} P_{n}(C,\xi).
$$
\end{remark}

\section{Hall algebra calculation}

In this section we prove Theorem \ref{main} using Theorem \ref{alt} and a rank 2 version of a Hall algebra calculation of Stoppa-Thomas \cite{ST}. As before, let $X$ be a smooth projective 3-fold and let $\ccR$ be a rank 2 reflexive sheaf on $X$ with $H^1(\det(\ccR)) =H^2(\det(\ccR)) = 0$ and a cosection $\sigma : \ccR \rightarrow \O_X$ cutting out a 1-dimensional closed subscheme. We denote by $\T$ the stack of 0-dimensional sheaves on $X$. We use the following $\T$-stacks:
\begin{itemize}
\item $1_\T$ is the identity map $\T \rightarrow \T$,
\item $\Hom(\ccR, \cdot)$ is the stack whose fibre over $\cQ \in \T$ is $\Hom(\ccR, \cQ)$,
\item $\Hom(\ccR, \cdot)^{\onto}$ is the stack whose fibre over $\cQ \in \T$ is $\Hom(\ccR, \cQ)^{\onto}$,
\item $\Ext^2(\cdot,\ccR)$ is the stack whose fibre over $\cQ \in \T$ is $\Ext^2(\cQ, \ccR)$,
\item $\Ext^2(\cdot,\ccR)^{\pure}$ is the stack whose fibre over $\cQ \in \T$ is 
$$
\Ext^2(\cQ, \ccR)^{\pure} = \Ext^2(\cQ, \ccR) \cap \Ext^1(\cQ, \O_C)^{\pure} \subset \Ext^1(\cQ, \O_C),
$$
where we use the embedding $\Ext^2(\cQ,\ccR) \subset \Ext^1(\cQ,\O_C)$ of Lemma \ref{lem2}.
\end{itemize}

Denote by $H(\T) := K(\mathrm{St}/\T)$ the Grothendieck group of stacks (locally of finite type and with affine geometric stabilizers) over $\T$. Then $H(\T)$ can be endowed with the following product. Let $\T^2$ be the stack of short exact sequences $0 \rightarrow \cQ_1 \rightarrow \cQ \rightarrow \cQ_2 \rightarrow 0$ in $\T$ and let $\pi_i$ be the map induced by sending this short exact sequence to $\cQ_i$. For any two ($\T$-isomorphism classes of) $\T$-stacks $[U \rightarrow \T]$ and $[V \rightarrow \T]$, the product $[U * V \rightarrow \T]$ is defined by the following Cartesian diagram
\begin{equation}
\xymatrix
{
U*V \ar[d] \ar[r] & \T^2 \ar[d] \ar[r] & \T \\
U \times V \ar[r] & \T \times \T.
}
\end{equation}
This makes $(H(\T),*)$ into an associative algebra, known as a Joyce's motivic Ringel-Hall algebra. See \cite{ST} and \cite{Bri1,Bri2,Joy1,Joy2,Joy3,Joy4,KS} for details.

Using the inclusion-exclusion principle, we can write $\Hom(\ccR,\cQ)^{\onto}$ as
$$
\Hom(\ccR,\cQ) - \bigsqcup_{\cQ_1 < \cQ} \Hom(\ccR,\cQ_1) + \bigsqcup_{\cQ_1 < \cQ_2 <\cQ} \Hom(\ccR,\cQ_1) - \cdots,
$$
where $<$ denotes \emph{strict} inclusion. This leads to the analog in our context of Bridgeland's generalization of Reineke's formula \cite{Bri2} 
$$
\Hom(\ccR,\cdot) = \Hom(\ccR,\cdot)^{\onto} * 1_\T.
$$
We recall the following identity from \cite{ST}
$$
\Ext^1(\cdot,\O_C) = 1_\T * \Ext^1(\cdot, \O_C)^{\pure}.
$$
Since $\Ext^2(\cdot,\ccR) \subset \Ext^1(\cdot,\O_C)$ (Lemma \ref{lem2}), ``intersecting'' this equation with $\Ext^2(\cdot,\ccR)$ yields\footnote{Given $I^\mdot = \{\O_X \rightarrow F \} \in \Ext^1(\cdot,\O_C)$, modding out by the torsion subsheaf of $F$ leads to an element of $1_{\T} * \Ext^1(\cdot, \O_C)^{\pure}$. This map is a geometric bijection. In our context one has to verify that if $I^\mdot$ has property \eqref{kercon}, then the associated PT pair $\tilde{I}^{\mdot} = \{\O_X \rightarrow \tilde{F} \}$, where $\tilde{F} = F / \mathrm{torsion}$ also has property \eqref{kercon}. This follows from the fact that the natural map $\ext^3(\tilde{\cQ},L) \rightarrow \ext^3(\cQ,L)$ is injective, where $\cQ$, $\tilde{\cQ}$ are the cokernels of $I^{\mdot}$, $\tilde{I}^{\mdot}$ respectively.}
$$
\Ext^2(\cdot,\ccR) = 1_\T * \Ext^2(\cdot, \ccR)^{\pure}.
$$

Now let 
$$
P_z(\cdot) : H(\T) \longrightarrow \Q(z)[\![q]\!],
$$
denote the virtual Poincar\'e polynomial. Here $z$ is the formal variable of $P_z$ and $q$ keeps track of an additional grading as follows. Any element $[U \rightarrow \T] \in H(\T)$ is locally of finite type and can have infinitely many components. Let $\T_n \subset \T$ be the substack of 0-dimensional sheaves of length $n$ and define
$$
P_z(U) := \sum_{n=0}^{\infty} P_z(U \times_{\T} \T_n) \, q^n.
$$
In fact, $P_z(\cdot)$ is a \emph{Lie algebra homomorphism} to the abelian Lie algebra $\Q(z)[\![q]\!]$ \cite[Thm.~4.32]{ST}. We obtain
\begin{align*}
P_z \big(\Ext^2(\cdot,\ccR)^{\pure} \big)(z^2 q) &= P_z \big(1_{\T}^{-1} * \Ext^2(\cdot,\ccR) \big)(z^2 q) \\
&= P_z \big(\Ext^2(\cdot,\ccR) * 1_{\T}^{-1} \big)(z^2 q), 
\end{align*}
where we weigh the component over $\T_n$ by $z^2q$ instead of $q$ for reasons that will become apparent. We need one more $\T$-stack: $\C^{2\ell(\cdot)}$, whose fibre over $\cQ \in \T$ is $\C^{2\ell(\cQ)}$, where $\ell(\cQ)$ denotes the length of $\cQ$. Over strata in $\T$ where $\Hom(\ccR,\cdot)$ is constant, the stacks 
$$
\Hom(\ccR,\cdot), \ \Ext^2(\cdot,\ccR) \oplus \C^{2\ell(\cdot)}
$$
are both Zariski locally trivial of the same rank by Lemma \ref{lem1}. We deduce
\begin{align*}
P_z \big(\Ext^2(\cdot,\ccR) * 1_{\T}^{-1} \big)(z^2 q) &=P_z \Big( \big(\Ext^2(\cdot,\ccR) \oplus \C^{2\ell(\cdot)}\big) *  \big(\C^{2\ell(\cdot)} \big)^{-1} \Big)(q) \\
&= P_z\big( \Hom(\ccR,\cdot) * (\C^{2\ell(\cdot)})^{-1} \big)(q).
\end{align*}
On the other hand
\begin{align*}
P_z\big(\Hom(\ccR, \cdot)^{\onto} \big)(q) &= P_z \big(\Hom(\ccR, \cdot) * 1_{\T}^{-1} \big)(q) \\
&= P_z \Big( \big(\Hom(\ccR, \cdot) * (\C^{2\ell(\cdot)})^{-1} \big) * \big(\C^{2\ell(\cdot)} * 1_{\T}^{-1}\big) \Big)(q). 
\end{align*}
Define $U := \Hom(\ccR, \cdot) * (\C^{2\ell(\cdot)})^{-1}$ and $V:=\C^{2\ell(\cdot)} * 1_{\T}^{-1}$. By \cite[Thm.~4.34]{ST}, if both $\lim_{z \rightarrow 1} P_z(U)$ and $\lim_{z \rightarrow 1} P_z(V)$ exist, we have
$$
\lim_{z \rightarrow 1} P_z(U*V) = \lim_{z \rightarrow 1} P_z(U) \, \lim_{z \rightarrow 1} P_z(V).
$$
In our setting
\begin{align*}
&\lim_{z \rightarrow 1} P_z \big(\C^{2\ell(\cdot)} * 1_{\T}^{-1} \big)(q) = \lim_{z \rightarrow 1} P_z \big(\Hom(\O_{X}^{\oplus 2}, \cdot)^{\onto} \big)(q) = M(q)^{2e(X)}, \\
&\lim_{z \rightarrow 1} P_z \big(\Hom(\ccR, \cdot) * (\C^{2\ell(\cdot)})^{-1} \big)(q) = \lim_{z \rightarrow 1} P_z \big(\Ext^2(\cdot,\ccR)^{\pure} \big)(z^2q) \\
&\qquad\qquad\qquad\qquad\qquad\qquad\qquad \ \ = \sum_{n=0}^{\infty} e \big( \Quot(\ext^1(\ccR,\O_X),n) \big) \, q^n, \\
&\lim_{z \rightarrow 1} P_z(U*V) = \lim_{z \rightarrow 1}P_z\big(\Hom(\ccR, \cdot)^{\onto}\big)(q) = \sum_{n=0}^{\infty} e\big(\Quot(\ccR,n)\big) \, q^n,
\end{align*}
where the fourth equality follows from Theorem \ref{alt}. This proves Theorem \ref{main}.

\section{Toric calculation} \label{torus}

In this section we prove Theorem \ref{GKYthm} by combining Theorem \ref{main} with $T$-localization. Since Theorem \ref{main} is about smooth \emph{projective} 3-folds, we start by compactifying $\C^3$. Let $X = \PP^3$ with homogeneous coordinates $X_0,X_1,X_2,X_3$ and let $U_0 = \{X_0 \neq 0\} \cong \C^3$.

In chart $U_0$, we use coordinates $x_i = X_i / X_0$, for $i=1,2,3$. The action of $T$ on $U_0$ is given by $t \cdot x_i = t_i x_i$. We describe a \emph{singular} rank 2 $T$-equivariant reflexive sheaf on $\C^3$ by describing the corresponding $\Z^3$-graded $\C[x_1,x_2,x_3]$-module. Here $X(T) = \Z^3$ is the character group of $T$. For each $w \in \Z^3$ consider the vector space $\C \cdot \bfone_w \oplus \C \cdot \bftwo_w \oplus \C \cdot \bfthree_w$, where $\bfone_w, \bftwo_w, \bfthree_w$ denote independent vectors. Choose integers $u_1, u_2, u_3 \in \Z$, $v_1, v_2, v_3 \in \Z_{>0}$. The module we will construct has three homogeneous generators of weights
\begin{equation} \label{wghts2}
(u_1,u_2,u_3) + (v_1,v_2,0), \ (u_1,u_2,u_3) + (v_1,0,v_3), \ (u_1,u_2,u_3) + (0,v_2,v_3).
\end{equation}
Define
$$
A:= \bigoplus_{{\scriptsize{ \begin{array}{c} w_1 \geq u_1 + v_1 \\ w_2 \geq u_2 + v_2 \\ w_3 \geq u_3 \end{array}}}} \C \cdot \bfone_w \oplus \bigoplus_{{\scriptsize{ \begin{array}{c} w_1 \geq u_1 + v_1 \\ w_2 \geq u_2 \\ w_3 \geq u_3 + v_3 \end{array}}}} \C \cdot \bftwo_w \oplus \bigoplus_{{\scriptsize{ \begin{array}{c} w_1 \geq u_1 \\ w_2 \geq u_2 + v_2 \\ w_3 \geq u_3 + v_3 \end{array}}}} \C \cdot \bfthree_w,
$$
which is a module under $x_1 \cdot \mathbf{i}_{w} = \mathbf{i}_{w+(1,0,0)}$, $x_2 \cdot \mathbf{i}_{w} = \mathbf{i}_{w+(0,1,0)}$, $x_3 \cdot \mathbf{i}_{w} = \mathbf{i}_{w+(0,0,1)}$ (adopting notation similar to \cite{PT2}). Define the following $\Z^3$-graded $\C[x_1,x_2,x_3]$-module
$$
R_0 := A  \Big/ \bigoplus_{{\scriptsize{ \begin{array}{c} w_1 \geq u_1 + v_1 \\ w_2 \geq u_2 + v_2 \\ w_3 \geq u_3 + v_3 \end{array}}}} \C \cdot (1,1,1)_w.
$$
Then $R_0$ corresponds to a singular rank 2 $T$-equivariant reflexive sheaf $\ccR_0$ on $U_0 \cong \C^3$ generated by three homogeneous generators of weights \eqref{wghts2}. 

Rank 2 $T$-equivariant reflexive sheaves on toric 3-folds and their moduli were studied by the authors in \cite{GK1} based on the work of A.~Klyachko \cite{Kly1, Kly2}. From the toric description (\cite{Kly1,Kly2} or \cite{GK1}) it is immediate that all singular rank 2 $T$-equivariant reflexive sheaves on $\C^3$ arise in the above way (up to $T$-equivariant isomorphism). Moreover, $\ccR_0$ as above extends to a rank 2 $T$-equivariant reflexive sheaf $\ccR$ on $X = \PP^3$, which is locally free outside $[1:0:0:0]$. This ``extension'' is unique up to tensoring by a $T$-equivariant line bundle on $X$. Since $\ccR|_{U_i} \cong \O_{U_i} \oplus \O_{U_i}$ for $i=1,2,3$, we find
\begin{align*}
&\sum_{n=0}^{\infty} e\big(\Quot(\ccR,n)\big) \, q^n = \sum_{n=0}^{\infty} e\big(\Quot(\ccR,n)^T\big) \, q^n = \prod_{i=0}^{3} \sum_{n=0}^{\infty} e\big(\Quot(\ccR|_{U_i},n)^T\big) \, q^n \\
&= \Bigg( \sum_{n=0}^{\infty} e\big(\Quot(\ccR_0,n)^T\big) \, q^n \Bigg) \cdot \Bigg( \sum_{n=0}^{\infty} e\big(\Quot(\O_{\C^3}^{\oplus 2},n)^T\big) \, q^n \Bigg)^3 \\
&=M(q)^6 \sum_{n=0}^{\infty} e\big(\Quot(\ccR_0,n)^T\big) \, q^n,
\end{align*}
where the first equality is $T$-localization, the second equality follows from the fact that the cokernels are 0-dimensional, and the fourth equality follows by using the extra scaling $\C^*$-action on $\O_{U_i} \oplus \O_{U_i}$. Therefore in order to prove Theorem \ref{GKYthm} from Theorem \ref{main}, we must construct a cosection on $\ccR$ and prove
$$
\sum_{n=0}^{\infty} e\big( \Quot(\ext^1(\ccR,\O_X),n) \big) \, q^n = M(q)^8 \prod_{i=1}^{v_1} \prod_{j=1}^{v_2} \prod_{k=1}^{v_3} \frac{1-q^{i+j+k-1}}{1-q^{i+j+k-2}}.
$$

On $U_0$ we choose a generic $T$-equivariant sub-line bundle $L_0 \subset R_0$ 
$$
L_0 := \bigoplus_{{\scriptsize{ \begin{array}{c} w_1 \geq u_1 + v_1 \\ w_2 \geq u_2 + v_2 \\ w_3 \geq u_3 + v_3 \end{array}}}} \Bigg( \C \cdot (a,b,c)_w \oplus  \C \cdot (1,1,1)_w \Big/  \C \cdot (1,1,1)_w \Bigg),
$$
where $(a,b,c)$ is sufficiently generic, e.g.~$(1,1,0)$ will do. Since the isomorphism type of $\Quot(\ccR,n)$ does not change when we replace $\ccR$ by $\ccR \otimes M$, for any $T$-equivariant line bundle $M$ on $\C^3$, we may assume without loss of generality that $u_1,u_2,u_3 = 0$ in \eqref{wghts2}. Recall that $v_1,v_2,v_3>0$ remain arbitrary. After this normalization, we obtain a $T$-equivariant isomorphism with an ideal sheaf
\begin{equation} \label{localses}
R_0 / L_0 \cong I_{C_0} \cong (x_{1}^{v_1} x_{2}^{v_2},x_{1}^{v_1} x_{3}^{v_3},x_{2}^{v_2} x_{3}^{v_3}),
\end{equation}
where $C_0 \subset U_0$ is the union of thickened coordinate axes. Again possibly after replacing $\ccR$ by $\ccR \otimes M$, one can easily show from the toric description (\cite{Kly1, Kly2} or \cite{GK1}) that $L_0$ extends to a global $T$-equivariant sub-line bundle $L \subset \ccR$, for which there exists a $T$-equivariant short exact sequence
$$
0 \longrightarrow L \longrightarrow \ccR \longrightarrow I_C \longrightarrow 0,
$$
where $C$ is the union of thickenings of the three toric $\PP^1$'s meeting in the point $[1:0:0:0]$. Therefore we may apply Theorem \ref{main}.

Finally we observe we are actually in the setting of Remark \ref{Hilbremark}. From the way we defined $R_0$, it is clear that there exists a $\Z^3$-graded short exact sequence
\begin{equation} \label{sesR}
0 \longrightarrow L_0 \longrightarrow N_0^{(1)} \oplus N_0^{(2)} \oplus N_0^{(3)} \longrightarrow R_0 \longrightarrow 0,
\end{equation}
where 
\begin{align*}
N_0^{(1)} := \bigoplus_{{\scriptsize{ \begin{array}{c} w_1 \geq v_1 \\ w_2 \geq  v_2 \\ w_3 \geq 0 \end{array}}}} \C \cdot \bfone_{w} , \ N_0^{(2)} := \bigoplus_{{\scriptsize{ \begin{array}{c} w_1 \geq v_1 \\ w_2 \geq  0 \\ w_3 \geq v_3 \end{array}}}} \C \cdot \bftwo_{w}, \ N_0^{(3)} := \bigoplus_{{\scriptsize{ \begin{array}{c} w_1 \geq 0 \\ w_2 \geq  v_2 \\ w_3 \geq v_3 \end{array}}}} \C \cdot \bfthree_{w}.
\end{align*}
Short exact sequence \eqref{sesR} easily extends to a $T$-equivariant 2-term resolution of $\ccR$ involving (sums of) line bundles. Taking $\hom(\cdot, \O_X)$ and using the local description reveals that $\ext^1(\ccR,L)$ is the structure sheaf of the 0-dimensional scheme defined by the ideal
$$
I_{\Sing(\ccR)} = (x_{1}^{v_1},x_{2}^{v_2},x_{3}^{v_3}).
$$
By \eqref{Hilb}, 
\begin{align*}
\sum_{n=0}^{\infty} e\big( \Quot(\ext^1(\ccR,\O_X),n) \big) \, q^n &= \sum_{n=0}^{\infty} e\big( \Hilb^n(\Sing(\ccR)) \big) \, q^n \\
&= \sum_{n=0}^{\infty} e\big( \Hilb^n(\Sing(\ccR))^T \big) \, q^n.
\end{align*}
The $T$-fixed points of $\Hilb^n(\Sing(\ccR))$ are in bijective correspondence with 3D partitions inside the box $[0,v_1] \times [0,v_2] \times [0,v_3]$. The generating function for these can be found in R.~P.~Stanley's book \cite[(7.109)]{Sta}. Theorem \ref{GKYthm} follows.

{\tt{amingh@umd.edu, m.kool1@uu.nl}}
\end{document}